\newcommand{\arctg}{\operatorname{arctg}} 
\newcommand{\sgn}{\operatorname{sgn}}
\renewcommand{\sinh}{\operatorname{sinh}}
\renewcommand{\cosh}{\operatorname{cosh}}
\newtheorem{thm}{Theorem}
\title{Hitting distribution of a correlated planar Brownian motion in a disk}
\author{ 
	\href{https://orcid.org/0000-0002-6163-044X}{Manfred Marvin Marchione}\\
	Department of Statistical Sciences\\
	Sapienza University of Rome\\
	\texttt{manfredmarvin.marchione@uniroma1.it} \\
	\And
	\href{https://orcid.org/0000-0002-6421-533X}{Enzo Orsingher} \\
	Department of Statistical Sciences\\
	Sapienza University of Rome\\
	\texttt{enzo.orsingher@uniroma1.it}}
\date{December 31, 2021}
\begin{document}
\maketitle

\begin{abstract}
In this paper we study the hitting probability of a circumference $C_R$ for a correlated Brownian motion $\underline{B}(t)=\left(B_1(t), B_2(t)\right)$, $\rho$ being the correlation coefficient. The analysis starts by first mapping the circle $C_R$ into an ellipse $E$ with semiaxes depending on $\rho$ and transforming the differential operator governing the hitting distribution into the classical Laplace operator. By means of two different approaches (one obtained by applying elliptic coordinates) we obtain the desired distribution as a series of Poisson kernels.
\end{abstract}

\keywords{Elliptic coordinates \and Poisson kernel}

\section{Introduction}
The problem of hitting the boundary of a connected set $D$ by a Brownian motion is transformed into the solution of a Dirichlet problem
\begin{equation}\label{generalDirichlet}\begin{dcases}\Delta u=0\qquad\qquad & \normalfont{\text{in}}\;D\\
u=h\qquad\qquad &\normalfont{\text{on}}\;\partial D.\end{dcases}\end{equation}
The probabilistic interpretation of the problem (\ref{generalDirichlet}) is well-known (see, for example, \citeauthor{portstone} \citeyear{portstone}) and the solution has the form \begin{equation}\label{generalDirichletSolution}u(\underline{x})=\mathbb{E}\left(h(\tau_{\partial D})\right)\end{equation}
where $$\tau_{\partial D}=\inf\left\{t>0:\;\underline{B}(t)\in\partial D\right\}$$ and $\underline{B}(t)$ is a Brownian motion starting at $\underline{x}$.\\
The explicit form of (\ref{generalDirichletSolution}) depends on the structure of the set $D$ and is well-known for some specific sets like circles (hyperspheres in $\mathbb{R}^d$), halfplanes, quarters of planes. For sets with a more complicated geometrical structure, an explicit form of (\ref{generalDirichletSolution}), and thus the hitting place distribution of a Brownian motion, becomes difficult to find even in the two-dimensional case.

Most of the results in the literature concern the case of Brownian motions with independent components. Correlated Brownian motions are studied mainly in a financial context. For example, the joint distribution of maxima and minima of two correlated Brownian motions $B_1(t)$ and $B_2(t)$ was studied for pricing exotic options (\citeauthor{chuang} \citeyear{chuang}, \citeauthor{he} \citeyear{he}).\\
In the case of planar Brownian motion with correlated components, the problem (\ref{generalDirichlet}) becomes
\begin{equation}\label{generalCorrelatedDirichlet}\begin{dcases}\frac{\partial^2u}{\partial x^2}+2\rho\frac{\partial^2u}{\partial x\partial y}+\frac{\partial^2u}{\partial y^2}=0\qquad\qquad & (x,y)\in D\\
u(x,y)=h(x,y)\qquad\qquad &(x,y)\in\partial D,\end{dcases}\end{equation}
$\rho$ being the correlation coefficient between the components of the Brownian motion $\underline{B}(t)$. By means of a suitable linear transformation the differential operator appearing in (\ref{generalCorrelatedDirichlet}) can be transformed into the classical Laplace operator. Such a transformation modifies the geometrical structure of $D$, possibly complicating the solution of the Dirichlet problem. \citeauthor{iyengar} (\citeyear{iyengar}) and \citeauthor{metzler} (\citeyear{metzler}) analyzed the distribution of the exit point from a quarter of plane for a correlated planar Brownian motion $\underline{B}(t)$. By mapping the quadrant into a wedge with amplitude depending on $\rho$, they obtained the solution in terms of a series of Bessel functions. A similar approach has also been used in particle diffusion problems.
\citeauthor{majumdar} (\citeyear{majumdar}) mapped a square into a parallelogram in order to obtain the distribution of the maximum distance between the leader and the laggard for three independent Brownian motions in $\mathbb{R}$.\\
This paper deals with the hitting point distribution of a correlated planar Brownian motion in a disk $C_R$ of radius $R$ centered at the origin. We are interested in finding the hitting probability of the boundary $\partial C_R$ as a function of the starting point $(x,y)$
\begin{equation}\label{eq1}P\left\{\underline{B}(\tau_{\partial C_R})\in d\sigma|\underline{B}(0)=(x,y)\right\}=u(x,y)\end{equation} where $d\sigma$ represents an element of $\partial C_R$ and $h(x,y)$ is a Dirac delta function placed at the exit point $d\sigma$. The problem is tackled by transforming the disk $\partial C_R$ into an ellipse with semi-axis depending on the correlation coefficient $\rho$.\\
The Dirichlet problem for the classical Laplace operator in an oval with the form $$D=\left\{(x,y):\;\frac{x^2}{a^2}+\frac{y^2}{b^2}<1\right\}$$
has been studied, from a probabilistic point of view, by \citeauthor{yin} (\citeyear{yin}). The authors obtained the joint distribution of the hitting time and point for an uncorrelated Brownian motion in an ellipse in terms of a double series of Mathieu functions. In the mathematical physics literature, the Green's function for the Dirichlet problem in an ellipse has been obtained in elliptic coordinates (see \citeauthor{morse} (\citeyear{morse}), pag. 1202).\\
In this paper we compare two different approaches we found in the literature: the first one is based on a suitable transformation of the ellipse into a circular annulus as proposed by \citeauthor{ghizzetti} (\citeyear{ghizzetti}), while the second approach is based on the application of elliptic coordinates. We prove the equivalence of the two approaches by showing that elliptic coordinates and Ghizzetti's transformation are stricly related. Furthermore, differently from other authors, we express the distribution of the hitting point $\underline{B}(\tau_{\partial C_R})$ in terms of the superposition of Poisson kernels with different starting points.

\section{Cholesky decomposition: transforming the circle into an ellipse}
In order to solve the Dirichlet problem
\begin{equation}\label{eq3}\begin{dcases}\frac{\partial^2u}{\partial x^2}+2\rho\frac{\partial^2u}{\partial x\partial y}+\frac{\partial^2u}{\partial y^2}=0\qquad\qquad & (x,y)\in C_R\\
u(x,y)=h(x,y)\qquad\qquad &(x,y)\in\partial C_R\end{dcases}\end{equation}
our first step is the transformation of the differential operator in (\ref{eq3}) into the classical bivariate Laplace equation, as shown in the next theorem.

\begin{thm}\label{choleskythm}Consider the linear transformation
\begin{equation}\label{linear}
\begin{pmatrix}
w\\
z
\end{pmatrix}
=\begin{pmatrix}
\frac{1}{\sqrt{2(1-|\rho|)}} & -\frac{\sgn(\rho)}{\sqrt{2(1-|\rho|)}}\\
\frac{\sgn(\rho)}{\sqrt{2(1+|\rho|)}} & \frac{1}{\sqrt{2(1+|\rho|)}}
\end{pmatrix}\cdot
\begin{pmatrix}
x\\
y
\end{pmatrix}.
\end{equation}
The following relationship holds:
$$\frac{\partial^2}{\partial x^2}+2\rho\frac{\partial^2}{\partial x\partial y}+\frac{\partial^2}{\partial y^2}=\frac{\partial^2}{\partial w^2}+\frac{\partial^2}{\partial z^2}.$$
Moreover, the circle $\partial C_R=\{(x,y): x^2+y^2=R^2\}$ is mapped into the canonical ellipse \begin{equation}\label{canonicalellipse}\frac{w^2}{a^2}+\frac{z^2}{b^2}=1\end{equation}
where the semiaxis $a=\frac{R}{\sqrt{1-|\rho|}}$ and $b=\frac{R}{\sqrt{1+|\rho|}}$ are functions of the correlation coefficient $\rho$.
\end{thm}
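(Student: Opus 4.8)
The plan is to treat the two assertions separately, each reducing to a short computation organised around the coefficient matrix $A=\left(\begin{smallmatrix}1 & \rho\\ \rho & 1\end{smallmatrix}\right)$ of the operator in \eqref{eq3}. Write $M$ for the $2\times2$ matrix in \eqref{linear}, so that $\binom{w}{z}=M\binom{x}{y}$. Since $M$ has constant entries, the chain rule gives the operator identities $\partial_x=M_{11}\partial_w+M_{21}\partial_z$ and $\partial_y=M_{12}\partial_w+M_{22}\partial_z$. Expanding $\partial_{xx}$, $\partial_{xy}$, $\partial_{yy}$ from these and substituting into $\partial_{xx}+2\rho\,\partial_{xy}+\partial_{yy}$, one finds that the coefficients of $\partial_{ww}$, $\partial_{wz}$, $\partial_{zz}$ in the transformed operator are exactly the three entries of $MAM^{T}$. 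Hence the first claim is equivalent to the algebraic identity $MAM^{T}=I$.

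To verify $MAM^{T}=I$ I would keep $|\rho|$ and $\sgn(\rho)$ symbolic and use $\sgn(\rho)^2=1$ together with $\rho=\sgn(\rho)\,|\rho|$, so as to cover both signs of $\rho$ at once. The two rows of $M$ are $\tfrac{1}{\sqrt{2(1-|\rho|)}}\bigl(1,-\sgn\rho\bigr)$ and $\tfrac{1}{\sqrt{2(1+|\rho|)}}\bigl(\sgn\rho,1\bigr)$; these are proportional to the mutually orthogonal eigenvectors of $A$ with eigenvalues $1-|\rho|$ and $1+|\rho|$. Multiplying a row of $M$ on the right by $A$ therefore rescales it by the corresponding eigenvalue, and the normalising constants $1/\sqrt{2(1\mp|\rho|)}$ are chosen precisely so that the two diagonal entries of $MAM^{T}$ equal $1$, while orthogonality of the eigenvectors kills the off-diagonal entry. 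This eigenvector structure of $M$ is the conceptual point behind the statement; the rest is routine bookkeeping.

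For the image of the circle under \eqref{linear} I would invert the transformation. A direct computation gives $\det M=(1-\rho^2)^{-1/2}$ and
$$M^{-1}=\begin{pmatrix}\sqrt{\tfrac{1-|\rho|}{2}} & \sgn(\rho)\sqrt{\tfrac{1+|\rho|}{2}}\\[4pt] -\sgn(\rho)\sqrt{\tfrac{1-|\rho|}{2}} & \sqrt{\tfrac{1+|\rho|}{2}}\end{pmatrix},$$
so that, writing $(x,y)^{T}=M^{-1}(w,z)^{T}$, one has $x^2+y^2=(w,z)\,(M^{-1})^{T}M^{-1}(w,z)^{T}=(w,z)\,(MM^{T})^{-1}(w,z)^{T}$. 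From the explicit rows of $M$ one reads off $MM^{T}=\operatorname{diag}\!\bigl(\tfrac{1}{1-|\rho|},\tfrac{1}{1+|\rho|}\bigr)$, hence $x^2+y^2=(1-|\rho|)\,w^2+(1+|\rho|)\,z^2$, the cross term (which carries a factor $\sgn\rho$) cancelling. Imposing $x^2+y^2=R^2$ and dividing by $R^2$ yields $\tfrac{w^2}{a^2}+\tfrac{z^2}{b^2}=1$ with $a^2=\tfrac{R^2}{1-|\rho|}$ and $b^2=\tfrac{R^2}{1+|\rho|}$, i.e. \eqref{canonicalellipse}.

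The only points requiring a little care are the sign bookkeeping, which the symbolic treatment above absorbs once and for all, and the admissible range of $\rho$: the formulas need $|\rho|<1$, which holds since $\rho$ is a correlation coefficient (the degenerate case $|\rho|=1$ corresponds to $\underline{B}$ being supported on a line), while $\sgn(\rho)$ makes $M$ ambiguous at $\rho=0$, a case one may discard since there \eqref{eq3} is already Laplace's equation. I do not expect any genuine obstacle beyond keeping the algebra honest.
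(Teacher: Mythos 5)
Your argument is correct, and it takes a genuinely different route from the paper's. The paper factors the transformation matrix $L$ in \eqref{linear} as a product $L = M'N$ of an inverse--Cholesky step $N=\bigl(\begin{smallmatrix}\frac{1}{\sqrt{1-\rho^2}}&\frac{-\rho}{\sqrt{1-\rho^2}}\\0&1\end{smallmatrix}\bigr)$, which decorrelates the components but sends $C_R$ to a tilted ellipse, followed by a rotation $M'$ that realigns the axes; it is a constructive decomposition explaining how the matrix was designed. You instead recognise $L$ as $\Lambda^{-1/2}P^{T}$ for the spectral decomposition $A=P\Lambda P^{T}$ of the coefficient matrix $A=\bigl(\begin{smallmatrix}1&\rho\\\rho&1\end{smallmatrix}\bigr)$: the rows are orthogonal eigenvectors of $A$ for eigenvalues $1\mp|\rho|$, scaled so that $LAL^{T}=I$, and the same row structure gives $LL^{T}=\operatorname{diag}\!\bigl(\tfrac{1}{1-|\rho|},\tfrac{1}{1+|\rho|}\bigr)$, from which the ellipse drops out at once. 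The paper's factorisation motivates the title "Cholesky decomposition" and ties in with the probabilistic decorrelation picture; your spectral reading is more economical (one conceptual observation handles both claims, and $\sgn(\rho)^2=1$, $\rho=\sgn(\rho)|\rho|$ absorb the sign bookkeeping) and is also more transparent about why the cross terms vanish. One tiny slip in phrasing: the off-diagonal entry of $LL^{T}$ is identically zero (the $\sgn\rho$ terms cancel before any factor is "carried"), so strictly speaking there is no cross term to cancel; and your remark about the $\rho=0$ ambiguity is apt --- with the convention $\sgn(0)=0$ the matrix $L$ degenerates and the operator identity fails, so one should either take $\sgn(0)=\pm1$ or exclude $\rho=0$ (where the claim is vacuous anyway). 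Otherwise the proof is sound and, unlike the paper's sketch, spells out why the normalisations work.
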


\begin{proof}We restrict ourselves to outlining the idea behind the proof since the calculations are straightforward. The transformation (\ref{linear}) can be decomposed in the following manner:
$$\begin{pmatrix}w\\z\end{pmatrix}=M\cdot N\cdot\begin{pmatrix}x\\y\end{pmatrix}$$
where $$M=\begin{pmatrix}\sqrt{\frac{1+|\rho|}{2}} & -\sgn(\rho)\sqrt{\frac{1-|\rho|}{2}}\\\sgn(\rho)\sqrt{\frac{1-|\rho|}{2}}&\sqrt{\frac{1+|\rho|}{2}}\end{pmatrix},
\qquad\qquad N=\begin{pmatrix}\frac{1}{\sqrt{1-\rho^2}}&\frac{-\rho}{\sqrt{1-\rho^2}}\\0&1\end{pmatrix}.$$
The matrix $N$ describes a Cholesky decomposition of the covariance matrix of the Brownian motion which transforms $C_R$ into a rotated ellipse and eliminates the correlation from the differential operator as desired. The matrix $M$ corresponds to a counterclockwise rotation of angle $\alpha=\sgn(\rho)\cdot\arctg\sqrt{\frac{1-|\rho|}{1+|\rho|}}$ which reduces the obtained ellipse to the canonical form (\ref{canonicalellipse}).
\end{proof}

We have thus transformed the initial Dirichlet problem (\ref{eq3}) in a circle into the Dirichlet problem in the elliptic set $E=\left\{w^2(1-|\rho|)+z^2(1+|\rho|)<R^2\right\}$
\begin{equation}\label{zwDirich}\begin{dcases}\frac{\partial^2u}{\partial w^2}+\frac{\partial^2u}{\partial z^2}=0\qquad\qquad &(w,z)\in E\\
u(w,z)=h(w,z)\qquad &(w,z)\in\partial E\end{dcases}\end{equation}
where we have maintained, for simplicity, the same symbol for the unknown function. The transformation of (\ref{eq3}) into (\ref{zwDirich}) converts the research of the hitting point of a correlated planar Brownian motion with source inside a disk into the derivation of the distribution of the hitting point for a planar Brownian motion (with independent components) with a starting point inside an elliptic set.

\section{Mapping the ellipse into a circular annulus}
Our next step  is that of transforming the ellipse $E$ into a circular annulus by means of the transformation (\citeauthor{ghizzetti} \citeyear{ghizzetti})
\begin{equation}\label{ghizzetti}\begin{dcases}w=\left(\frac{a+b}{2}r+\frac{a-b}{2}\frac{1}{r}\right)\cos\theta\qquad\qquad0\le\theta\le2\pi\\z=\left(\frac{a+b}{2}r-\frac{a-b}{2}\frac{1}{r}\right)\sin\theta\qquad\qquad q\le r\le1\end{dcases}\end{equation}
where $q=\sqrt{\frac{a-b}{a+b}}$ and $a$ and $b$ are defined as in theorem \ref{choleskythm}.\\
For $r=1$, (\ref{ghizzetti}) maps the ellipse $\partial E$ onto the circumference of radius 1. For $r=q$, the interfocal segment of $E$ is mapped onto the circumference of radius $q$ since, for $r=q$, the transformation (\ref{ghizzetti}) becomes $$\begin{cases}w=\sqrt{a^2-b^2}\cos\theta\qquad\qquad0\le\theta\le2\pi\\z=0.\end{cases}$$
The confocal ellipses of equation $$\dfrac{w^2}{\left(\dfrac{a+b}{2}r+\dfrac{a-b}{2}\dfrac{1}{r}\right)^2}+\dfrac{z^2}{\left(\dfrac{a+b}{2}r-\dfrac{a-b}{2}\dfrac{1}{r}\right)^2}=1$$ are mapped onto circumferences of radius $q<r<1$ (see fig. \ref{fig:figura1} below).

\begin{figure}[h]
\begin{center}
\includegraphics[scale=1]{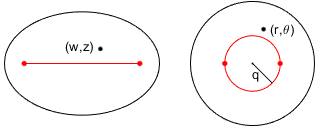}
\end{center}
\vspace{-3mm}

\caption{the transformation (\ref{ghizzetti}) maps the ellipse $E$ ino the annulus of radii $(q,1)$.}
\label{fig:figura1}
\end{figure}

\noindent In order to solve the Dirichlet problem (\ref{zwDirich}) in the $(r,\theta)$ coordinates we use the following relationship:
$$\frac{\partial^2 }{\partial r^2}+\frac{1}{r}\frac{\partial }{\partial r}+\frac{1}{r^2}\frac{\partial^2}{\partial \theta^2}=\left[\left(\frac{a+b}{2}-\frac{a-b}{2}\frac{1}{r^2}\right)^2+\frac{(a^2-b^2)\sin^2\theta}{r^2}\right]\left(\frac{\partial^2}{\partial w^2}+\frac{\partial^2}{\partial z^2}\right).$$

Thus, we solve the problem (\ref{zwDirich}) by setting $f(r,\theta)=u(w,z).$ We start with some remarks. First of all it can be checked that the points $(q,\theta)$ and $(q,-\theta)$ are mapped, by means of the transformation (\ref{ghizzetti}), into the same point in the focal segment of the ellipse $\frac{w^2}{a^2}+\frac{z^2}{b^2}=R^2$. Thus, for the function $u(w,z)$ to be continuous on the focal segment, $f(q,\theta)$ must be an even function of $\theta$, that is $$f(q,\theta)=f(q,-\theta).$$
Analogously, $\frac{\partial}{\partial r}f(r,\theta)|_{r=q}$ must be an odd function of $\theta$, that is \begin{equation}\label{deriv}\frac{\partial}{\partial r}f(r,\theta)|_{r=q}=-\frac{\partial}{\partial r}f(r,-\theta)|_{r=q}.\end{equation}
In fact, the relationship (\ref{ghizzetti}) implies that, for fixed $\theta$, we have the hyperbola of equation\begin{equation}\label{hyperbola}\frac{w^2}{\cos^2\theta}-\frac{z^2}{\sin^2\theta}=a^2-b^2.\end{equation}
The derivative of $f(r,\theta)$ with respect to $r$ can be interpreted as the the slope of the function $f$ along the hyperbola defined by (\ref{hyperbola}). By changing the sign of $\theta$ we obtain the slope of $f$ along the same hyperbola in the opposite orientation. 
It follows that the condition (\ref{deriv}) must hold.

\begin{figure}[h]
\begin{center}
\includegraphics[scale=1]{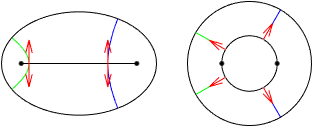}
\end{center}
\vspace{-3mm}

\caption{the derivative $\frac{\partial}{\partial r} f(r,\theta)$ represents, for fixed $\theta$, the slope of $f$ along the branch of hyperbola identified by $\theta$. Thus, if the derivative is evaluated in opposite orientations (red arrows) along the hyperbola, the sign must change.}
\end{figure}

\noindent We can finally reformulate problem (\ref{zwDirich}) in the following manner:

\begingroup
\addtolength{\jot}{0.5em}
\begin{subequations}\label{rthetaDirich:test}
\begin{numcases}{}  
\frac{\partial^2 f}{\partial r^2}+\frac{1}{r}\frac{\partial f}{\partial r}+\frac{1}{r^2}\frac{\partial^2 f}{\partial \theta^2}=0\qquad\qquad & $r<1$\label{rthetaDirich:eq}\\
f(r,\theta)=h(\theta) & $r=1$\label{rthetaDirich:bound}\\
f(q,\theta)\;\text{is even}\label{rthetaDirich:c1}\\
\frac{\partial}{\partial r}f(r,\theta)|_{r=q}\;\text{is odd}.\label{rthetaDirich:c2}
\end{numcases}
\end{subequations}
\endgroup

The reader is warned not to confuse the problem (\ref{rthetaDirich:test}) with the classical Dirichlet problem for an uncorrelated Brownian motion in a circular annulus. The distribution of the hitting place of a Brownian motion starting from inside a concentric sperical shell in $\mathbb{R}^d$ is well-known and \citeauthor{wendel} (\citeyear{wendel}) has also obtained the joint distribution of the hitting time and hitting point. As for the problem analyzed in this paper, mapping the ellipse to a circular annulus modifies the nature of the examined stochastic process whose transition function satisfies, after the transformation, the following partial differential equation:
$$\frac{\partial u}{\partial t}=\frac{1}{\left(\frac{a+b}{2}-\frac{a-b}{2}\frac{1}{r^2}\right)^2+\frac{(a^2-b^2)\sin^2\theta}{r^2}}\left(\frac{\partial^2 }{\partial r^2}+\frac{1}{r}\frac{\partial }{\partial r}+\frac{1}{r^2}\frac{\partial^2}{\partial \theta^2}\right)u.$$
Observe that the radial coordinate $r$ of the transformed stochastic process is not a Bessel process as it would be in the classical hitting point problem in a circular annulus. Moreover, the Brownian paths which cross the focal segment in the ellipse are mapped to discontinuous paths in the circular annulus, as shown in figure \ref{discontinuous}.\\
\vspace{-4mm}

\begin{figure}[h]
\begin{center}
\includegraphics[scale=1]{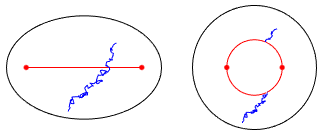}
\end{center}
\vspace{-3mm}

\caption{the transformation (\ref{ghizzetti}) maps the Browian paths crossing the focal segment in $E$ into discontinuous paths in the circular annulus.}
\label{discontinuous}
\end{figure}
\noindent The solution to problem (\ref{rthetaDirich:test}) is given in the next theorem.

\begin{thm}\label{ghizzettithm}The general solution of the Dirichlet problem (\ref{rthetaDirich:test}) is
\begin{align}\label{ghizzettithmsolution}f(r,\theta)=\int_0^{2\pi}h(\tau)\left[\frac{1}{2\pi}+\frac{1}{\pi}\sum_{k=1}^{+\infty}r^k\left(\frac{1+\left(\frac{q}{r}\right)^{2k}}{1+q^{2k}}\cos k\theta\cos k\tau\;\right.\right.\nonumber\\
\left.\left.+\;\frac{1-\left(\frac{q}{r}\right)^{2k}}{1-q^{2k}}\sin k\theta\sin k\tau\right)\right]d\tau.\end{align}
\end{thm}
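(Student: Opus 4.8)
The plan is to solve (\ref{rthetaDirich:test}) by separation of variables in the annulus $q<r<1$ and to determine every free constant from the four conditions listed there. First I would write the general solution of the Laplace equation (\ref{rthetaDirich:eq}) in its classical annular form
$$f(r,\theta)=A_0+B_0\ln r+\sum_{k=1}^{+\infty}\Big[(A_kr^k+B_kr^{-k})\cos k\theta+(C_kr^k+D_kr^{-k})\sin k\theta\Big],$$
which carries two free constants per Fourier mode (and per harmonic $k\ge 1$, one for $\cos k\theta$ and one for $\sin k\theta$).

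Next I would impose the two parity conditions on the inner circle $r=q$. Since $\cos k\theta$ is even and $\sin k\theta$ is odd, condition (\ref{rthetaDirich:c1}) forces the coefficient of $\sin k\theta$ in $f(q,\theta)$ to vanish, i.e. $C_kq^k+D_kq^{-k}=0$, hence $D_k=-q^{2k}C_k$; the constant and $\cos k\theta$ parts are automatically even and give no constraint. Differentiating term by term and evaluating at $r=q$, condition (\ref{rthetaDirich:c2}) forces the constant term $B_0/q$ and the coefficient of every $\cos k\theta$ in $\partial_r f(\cdot,\theta)|_{r=q}$ to vanish, which yields $B_0=0$ and $kA_kq^{k-1}-kB_kq^{-k-1}=0$, hence $B_k=q^{2k}A_k$. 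After this step each mode retains exactly one free constant, matched by the single scalar equation supplied by the outer boundary condition, so the whole system is exactly determined — in particular the solution is unique within the class of functions admitting a convergent Fourier expansion, which justifies calling (\ref{ghizzettithmsolution}) \emph{the} general solution.

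Then I would match $f(1,\theta)=h(\theta)$ against the Fourier series $h(\theta)=\tfrac{\alpha_0}{2}+\sum_{k\ge 1}(\alpha_k\cos k\theta+\beta_k\sin k\theta)$, $\alpha_k=\tfrac1\pi\int_0^{2\pi}h(\tau)\cos k\tau\,d\tau$, $\beta_k=\tfrac1\pi\int_0^{2\pi}h(\tau)\sin k\tau\,d\tau$, obtaining $A_0=\alpha_0/2$, $A_k(1+q^{2k})=\alpha_k$, $C_k(1-q^{2k})=\beta_k$. Substituting back, using $r^k+q^{2k}r^{-k}=r^k\big(1+(q/r)^{2k}\big)$ and $r^k-q^{2k}r^{-k}=r^k\big(1-(q/r)^{2k}\big)$, and inserting the integral formulas for $\alpha_k,\beta_k$, collapses the expansion into (\ref{ghizzettithmsolution}) after interchanging summation and integration — with the constant mode producing the $\tfrac{1}{2\pi}\int h$ term.

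The formal computation above is routine; the point that needs care is the analytic justification. For fixed $q<r<1$ one has $r^k+q^{2k}r^{-k}=r^k+(q^2/r)^k$ with $r<1$ and $q^2/r<q<1$, so both $r^k(1\pm(q/r)^{2k})/(1\pm q^{2k})$ decay geometrically in $k$; hence the series in (\ref{ghizzettithmsolution}) converges absolutely and uniformly on compact subsets of the open annulus, defines a genuine harmonic function there, and legitimizes the term‑by‑term differentiation used for (\ref{rthetaDirich:c2}) as well as the swap of $\sum$ and $\int$. The attainment of the datum at $r=1$ and of the parity conditions at $r=q$ must then be read in the appropriate limiting sense — as Abel/Poisson convergence when $h$ is continuous, and simply as the defining identity of a Poisson‑type kernel when $h$ is the Dirac mass relevant to (\ref{eq1}); this is the only genuinely delicate point, the kernel in brackets being, as $r\to 1$, exactly a superposition of Poisson kernels.
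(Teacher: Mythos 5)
Your proposal follows essentially the same route as the paper: separation of variables (equivalently, the classical annular Fourier--harmonic expansion), elimination of half the constants via the two parity conditions at $r=q$, and matching Fourier coefficients at $r=1$ before substituting the integral formulas for $\alpha_k,\beta_k$ and swapping sum and integral. Your version is slightly more careful in two respects the paper glosses over — you explicitly retain and then eliminate the $B_0\ln r$ term using condition (\ref{rthetaDirich:c2}), whereas the paper's stated general solution silently drops it, and you justify term-by-term differentiation and the $\sum/\int$ interchange by observing the geometric decay of $r^k\big(1\pm(q/r)^{2k}\big)/(1\pm q^{2k})$ for $q<r<1$ — but these are refinements of the same argument, not a different one.
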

\begin{proof}The problem (\ref{rthetaDirich:test}) can be solved by applying the method of separation of variables.\\
\noindent By assuming \begin{equation}f(r,\theta)=R(r)\Theta(\theta)\end{equation}we obtain the ordinary differential equations
\begin{align}\label{thetaeq}\Theta''(\theta)+\gamma^2\Theta(\theta)=0\\\label{req}r^2R''(r)+rR'(r)-\gamma^2 R(r)=0\end{align}where $\gamma$ is an arbitrary constant.
The general solutions of (\ref{thetaeq}) and (\ref{req}) are
$$\Theta(\theta)=A\cos\gamma\theta+B\sin\gamma\theta$$
and
$$R(r)=Cr^{\gamma}+Dr^{-\gamma}.$$
For the periodic character of the solution it must be $\gamma=k,\;k\in\mathbb{N}$ and the general solution of equation (\ref{rthetaDirich:eq}) is
\begin{align*}f(r,\theta)=&\frac{A_0}{2}+\sum_{k=1}^{+\infty}\left(a_k\cos k\theta+b_k\sin k\theta\right)\left(c_kr^k+d_kr^{-k}\right) \qquad\qquad\qquad\\=&\frac{A_0}{2}+\sum_{k=1}^{+\infty}\left[A_kr^k\cos k\theta+B_kr^k\sin k\theta+C_kr^{-k}\cos k\theta+D_kr^{-k}\sin k\theta\right].\end{align*}
In force of condition (\ref{rthetaDirich:c1}), that is $f(q,\theta)=f(q,-\theta)$, we have that $$B_kq^k=-D_kq^{-k}$$
and in light of (\ref{rthetaDirich:c2}), that is $\frac{\partial}{\partial r}f(r,\theta)|_{r=q}=-\frac{\partial}{\partial r}f(r,-\theta)|_{r=q}$, we obtain $$A_kq^k=C_kq^{-k}.$$
In conclusion, the solution of equation (\ref{rthetaDirich:eq}) with the constraints (\ref{rthetaDirich:c1}) and (\ref{rthetaDirich:c2}) becomes
\begin{align}\label{quasisol}f(r,\theta)=\frac{A_0}{2}+\sum_{k=1}^{+\infty}\Big[&A_k\bigg(r^k+\frac{q^{2k}}{r^k}\bigg)\cos k\theta\;\\
+\;&B_k\bigg(r^k-\frac{q^{2k}}{r^k}\bigg)\sin k\theta\Big].\nonumber\end{align}
For $r=1$ we must have
\begin{align}\label{quasisolboundary}f(1,\theta)=h(\theta)=\frac{A_0}{2}+\sum_{k=1}^{+\infty}\big[&A_k(1+q^{2k})\cos k\theta\;\\
+\;&B_k(1-q^{2k})\sin k\theta\big].\nonumber\end{align}
\noindent The Fourier coefficients of (\ref{quasisolboundary}) are therefore \begin{equation}\label{fourier}\begin{dcases}A_k=\frac{1}{\pi}\frac{1}{(1+q^{2k})}\int_0^{2\pi}h(\tau)\cos k\tau d\tau\\ B_k=\frac{1}{\pi}\frac{1}{(1-q^{2k})}\int_0^{2\pi}h(\tau)\sin k\tau d\tau\end{dcases}\end{equation}
\noindent The substitution of the Fourier coefficients (\ref{fourier}) into equation (\ref{quasisol}) completes the proof.
\end{proof}

The solution obtained in theorem \ref{ghizzettithm} can be expressed in terms of series of Poisson kernels. Indeed, the kernel of formula (\ref{ghizzettithmsolution}) can be written as
\begin{align}&\frac{1}{2\pi}+\frac{1}{\pi}\sum_{k=1}^{+\infty}r^k\left(\frac{1+\left(\frac{q}{r}\right)^{2k}}{1+q^{2k}}\cos k\theta\cos k\tau+\;\frac{1-\left(\frac{q}{r}\right)^{2k}}{1-q^{2k}}\sin k\theta\sin k\tau\right)\nonumber\\
&=\frac{1}{2\pi }+\frac{1}{\pi }\sum ^{\infty }_{k=1}r^{k}\cos \left( k\left( \theta -\tau \right) \right)+\frac{1}{\pi}\sum_{k=1}^{\infty}r^k\Bigg[\frac{q^{2k}\left(\frac{1}{r^{2k}}-1\right)}{1+q^{2k}}\cos k\theta \cos k\tau\nonumber\\
&\qquad\qquad\qquad\qquad\qquad\qquad\qquad\qquad\qquad-\frac{q^{2k}\left(\frac{1}{r^{2k}}-1\right)}{1-q^{2k}}\sin k\theta \sin k\tau\Bigg]\nonumber\\
&=\frac{1}{2\pi }+\frac{1}{\pi }\sum ^{\infty }_{k=1}r^{k}\cos \left( k\left( \theta -\tau \right) \right)+\frac{1}{\pi}\sum_{k=1}^{\infty}\frac{\dfrac{1}{r^k}-r^k}{1-q^{4k}}\Bigg[q^{2k}\cos\left(k\left(\theta+\tau\right)\right)\nonumber\\
&\qquad\qquad\qquad\qquad\qquad\qquad\qquad\qquad\qquad\qquad\;-q^{4k}\cos\left(k\left(\theta-\tau\right)\right)\Bigg]\nonumber\\
&=\frac{1}{2\pi }+\frac{1}{\pi }\sum ^{\infty }_{k=1}r^{k}\cos \left( k\left( \theta -\tau \right) \right)+\frac{1}{\pi}\sum_{j=0}^{\infty}\Bigg[\sum_{k=1}^{\infty}\left(\frac{q^{2+4j}}{r}\right)^k\cos\left(k\left(\theta+\tau\right)\right)\nonumber\\
&\qquad\qquad-\sum_{k=1}^{\infty}\left(rq^{2+4j}\right)^k\cos\left(k\left(\theta+\tau\right)\right)-\sum_{k=1}^{\infty}\left(\frac{q^{4+4j}}{r}\right)^k\cos\left(k\left(\theta-\tau\right)\right)\nonumber\\
&\qquad\qquad+\sum_{k=1}^{\infty}\left(rq^{4+4j}\right)^k\cos\left(k\left(\theta-\tau\right)\right)\Bigg]=\nonumber\\
&=\frac{1}{2\pi }\;\frac{1-r^2}{1-2r\cos(\theta-\tau)+r^2}+\frac{1}{2\pi}\sum_{j=0}^{\infty}\Bigg[\frac{r^2-q^{4(2j+1)}}{r^2-2rq^{2(2j+1)}\cos(\theta+\tau)+q^{4(2j+1)}}\qquad\qquad\qquad\qquad\nonumber\\
&\;\;\;-\frac{1-r^2q^{4(2j+1)}}{1-2rq^{2(2j+1)}\cos(\theta+\tau)+r^2q^{4(2j+1)}}-\frac{r^2-q^{8(j+1)}}{r^2-2rq^{4(j+1)}\cos(\theta-\tau)+q^{8(j+1)}}\nonumber\\
&\;\;\;+\frac{1-r^2q^{8(j+1)}}{1-2rq^{4(j+1)}\cos(\theta-\tau)+r^2q^{8(j+1)}}\Bigg].\label{kernelseries} \end{align}
It is well-known that the classical Poisson kernel for a planar Brownian motion with independent components writes
\begin{equation}\label{classicalPoissonkernel}P\{\underline{B}(T_{\partial C_R})\in d(R,\tau)|\underline{B}(0)=(r,\theta)\}=\frac{1}{2\pi}\frac{R^2-r^2}{R^2-2rR\cos(\theta-\tau)+r^2}\;d\tau.\end{equation}
Other probabilistic interpretations of (\ref{classicalPoissonkernel}) have been proposed in the literature. \citeauthor{orsingher} (\citeyear{orsingher}) obtained Poisson kernels by wrapping up time-changed (with a stable subordinator) pseudoprocesses on a circle.\\
Each term of the sum in (\ref{kernelseries}) has the form (\ref{classicalPoissonkernel}) with different starting points all located inside the circumference of radius $q$. 
Thus, the probability (\ref{eq1}) in the case of correlated Brownian motions can be obtained as the combination of Poisson probabilities with different starting points. When $j$ increases, the starting points approach the center of the disk and  the higher-order terms of the sum become negligible.\\
We have observed that the transformation (\ref{ghizzetti}) maps the Brownian motion moving in the ellipse $E$ into a new process moving inside the circular annulus of radii $(q,1)$. In principle, we should relate the final coordinates $(r,\theta)$ to the initial ones $(x,y)$ and interpret the result (\ref{kernelseries}) in terms of the distribution of $\underline{B}(\tau_{\partial C_R})$ in the original coordinates. Thus, it is necessary to invert the changes of coordinates performed so far. In particular, the inversion of the transformation (\ref{ghizzetti}) is tedious and will be treated in the last section.

\section{Solution in elliptic coordinates}
An alternative way to solve the Dirichlet problem (\ref{zwDirich}) is to use elliptic coordinates, defined by the formulas
\begin{equation}\label{ellipticcoordinates}\begin{dcases}w=\sqrt{a^2-b^2}\cosh\eta\cos\varphi\\z=\sqrt{a^2-b^2}\sinh \eta\sin\varphi.\end{dcases}\end{equation}
\noindent Notice that the ellipse $\partial E$ is obtained for $\eta=\hat{\eta}:=-\log q$ while the focal segment is obtained for $\eta=0$.\\
\noindent It is well known that the laplacian in elliptic coordinates (\citeauthor{lebedev} \citeyear{lebedev}, pag. 204) is
$$\frac{\partial^2}{\partial w^2}+\frac{\partial^2}{\partial z^2}=\frac{1}{(a^2-b^2)\left(\sin^2\varphi+\sinh^2\eta\right)}\left(\frac{\partial^2}{\partial \eta^2}+\frac{\partial^2}{\partial \varphi^2}\right).$$
Thus, by setting $g(\eta,\varphi)=u(w,z)$, we can reformulate the problem (\ref{zwDirich}) as follows:

\begingroup
\addtolength{\jot}{0.5em}
\begin{subequations}\label{etaphiDirich:test}
\begin{numcases}{}  
\frac{\partial^2g}{\partial \eta^2}+\frac{\partial^2g}{\partial \varphi^2}=0\qquad\qquad & $\eta<\hat{\eta}$\label{etaphiDirich:eq}\\
g(\eta,\theta)=h(\varphi) & $\eta=\hat{\eta}$\label{etaphiDirich:bound}\\
g(0,\varphi)\;\text{is even}\label{etaphiDirich:c1}\\
\frac{\partial}{\partial \eta}g(\eta,\varphi)|_{\eta=0}\;\text{is odd}.\label{etaphiDirich:c2}
\end{numcases}
\end{subequations}
\endgroup
where conditions (\ref{etaphiDirich:c1}) and (\ref{etaphiDirich:c2}) can be justified as in the previous section.\\
The general solution to the problem (\ref{etaphiDirich:eq}) is given in the next theorem.

\begin{thm}\label{ellipticthm}The general solution of the Dirichlet problem (\ref{etaphiDirich:test}) is
\begin{align}g(\eta,\varphi)=\int_0^{2\pi}h(\tau)\left[\frac{1}{2\pi}+\frac{1}{\pi}\sum_{k=1}^{+\infty}\frac{\sinh\eta}{\sinh\hat{\eta}}\cos k\varphi\cos k\tau\;\right.\nonumber\\
\left.+\;\frac{\cosh\eta}{\cosh\hat{\eta}}\sin k\varphi\sin k\tau\right]d\tau.\end{align}
\end{thm}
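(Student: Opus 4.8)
The plan is to solve the problem (\ref{etaphiDirich:test}) by separation of variables, in complete parallel with the proof of Theorem~\ref{ghizzettithm}. Writing $g(\eta,\varphi)=H(\eta)\,\Phi(\varphi)$ and substituting into the Laplace equation (\ref{etaphiDirich:eq}) produces the ordinary differential equations
\[\Phi''(\varphi)+\gamma^2\Phi(\varphi)=0,\qquad H''(\eta)-\gamma^2H(\eta)=0,\]
with $\gamma$ a separation constant. The required $2\pi$-periodicity of $\Phi$ in $\varphi$ forces $\gamma=k\in\mathbb{N}$, so for $k\ge 1$ one has $\Phi_k(\varphi)=a_k\cos k\varphi+b_k\sin k\varphi$ and $H_k(\eta)=c_k\cosh k\eta+d_k\sinh k\eta$, together with the constant mode $k=0$. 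Superposing, the general harmonic function is
\begin{align*}
g(\eta,\varphi)=\frac{A_0}{2}+\sum_{k=1}^{+\infty}\Big[&(C_k\cosh k\eta+D_k\sinh k\eta)\cos k\varphi\\
&+(E_k\cosh k\eta+F_k\sinh k\eta)\sin k\varphi\Big].
\end{align*}

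I would then impose the two matching conditions on the focal segment $\eta=0$. Since $\cosh 0=1$, $\sinh 0=0$, $\frac{d}{d\eta}\cosh k\eta|_{\eta=0}=0$ and $\frac{d}{d\eta}\sinh k\eta|_{\eta=0}=k$, condition (\ref{etaphiDirich:c1}) --- that $g(0,\cdot)$ be even --- kills the sine coefficients surviving at $\eta=0$ and gives $E_k=0$, while condition (\ref{etaphiDirich:c2}) --- that $\partial_\eta g(\cdot,\varphi)|_{\eta=0}$ be odd --- kills the cosine coefficients of the $\eta$-derivative at $\eta=0$ and gives $D_k=0$. The solution collapses to
\[g(\eta,\varphi)=\frac{A_0}{2}+\sum_{k=1}^{+\infty}\big[C_k\cosh k\eta\,\cos k\varphi+F_k\sinh k\eta\,\sin k\varphi\big].\]
Evaluating at $\eta=\hat\eta$ and equating with $h(\varphi)$ via (\ref{etaphiDirich:bound}) identifies $A_0$, $C_k\cosh k\hat\eta$ and $F_k\sinh k\hat\eta$ with the Fourier coefficients of $h$ on $[0,2\pi]$, exactly as in (\ref{fourier}): $A_0=\frac{1}{\pi}\int_0^{2\pi}h(\tau)\,d\tau$, $C_k=\frac{1}{\pi\cosh k\hat\eta}\int_0^{2\pi}h(\tau)\cos k\tau\,d\tau$, $F_k=\frac{1}{\pi\sinh k\hat\eta}\int_0^{2\pi}h(\tau)\sin k\tau\,d\tau$. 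Substituting back and interchanging summation with integration yields the kernel in the statement, with $\cosh k\eta/\cosh k\hat\eta$ multiplying $\cos k\varphi\cos k\tau$ and $\sinh k\eta/\sinh k\hat\eta$ multiplying $\sin k\varphi\sin k\tau$.

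Because there is no $r^{\pm k}$ bookkeeping as in Theorem~\ref{ghizzettithm}, the formal computation should be routine; the one point deserving care is to check that the formal series genuinely solves (\ref{etaphiDirich:test}) --- termwise harmonicity in $(\eta,\varphi)$ is immediate, but one must verify convergence for $\eta<\hat\eta$ and the sense in which the boundary datum and the two parity conditions are attained (standard here, since the relevant $h$ is a limit of smooth data). It is also worth recording a consistency check that simultaneously gives an alternative proof: the substitution $r=q\,e^{\eta}$, equivalently $\eta=\hat\eta+\log r$ (recall $\hat\eta=-\log q$ and $q=\sqrt{(a-b)/(a+b)}$), carries Ghizzetti's map (\ref{ghizzetti}) exactly onto the elliptic coordinates (\ref{ellipticcoordinates}) with $\varphi=\theta$; under it $r^k(1\pm(q/r)^{2k})$ equals $2q^k\cosh k\eta$ or $2q^k\sinh k\eta$ and $1\pm q^{2k}$ equals $2q^k\cosh k\hat\eta$ or $2q^k\sinh k\hat\eta$, so the ratios $\frac{r^k(1+(q/r)^{2k})}{1+q^{2k}}$ and $\frac{r^k(1-(q/r)^{2k})}{1-q^{2k}}$ appearing in (\ref{ghizzettithmsolution}) collapse to $\frac{\cosh k\eta}{\cosh k\hat\eta}$ and $\frac{\sinh k\eta}{\sinh k\hat\eta}$, and Theorem~\ref{ghizzettithm} transforms directly into the asserted formula. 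I would present the separation-of-variables argument as the main proof and invoke this correspondence as the bridge between the two approaches.
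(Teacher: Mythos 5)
Your separation-of-variables argument is correct and is essentially the paper's own proof: the same ODEs, the same parity conditions at $\eta=0$ eliminating the $\cosh k\eta\sin k\varphi$ and $\sinh k\eta\cos k\varphi$ modes, and the same Fourier matching at $\eta=\hat\eta$. The consistency check you append via the substitution $r=qe^\eta$ is also right and is precisely what the paper carries out in its final section to reconcile Theorems~\ref{ghizzettithm} and~\ref{ellipticthm}. One point worth flagging: the kernel you obtain, $\frac{\cosh k\eta}{\cosh k\hat\eta}\cos k\varphi\cos k\tau+\frac{\sinh k\eta}{\sinh k\hat\eta}\sin k\varphi\sin k\tau$, is the one that actually follows from the paper's Fourier coefficients (\ref{ellipticfourier}) substituted into (\ref{ellipticquasisol}), whereas the displayed formula in Theorem~\ref{ellipticthm} pairs $\sinh$ with the cosine term and $\cosh$ with the sine term and drops the factors of $k$ inside the hyperbolic functions --- a typographical slip in the statement; your version is the correct one and is also what the change of variables $r=qe^\eta$ recovers from (\ref{ghizzettithmsolution}).
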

\begin{proof}We apply again the method of separation of variables as in theorem \ref{ghizzettithm}. By assuming \begin{equation}g(\eta,\varphi)=H(\eta)\Phi(\varphi)\end{equation}we obtain the ordinary equations
\begin{align}\label{varphieq}\Phi''(\varphi)+\gamma^2\Phi(\varphi)=0\\\label{etaeq}H''(\eta)-\gamma^2H(\eta)=0\end{align}where $\gamma$ is an arbitrary constant.
The general solutions of (\ref{varphieq}) and (\ref{etaeq}) are
$$\Phi(\varphi)=A\cos\gamma\varphi+B\sin\gamma\varphi$$
and
$$H(\eta)=C\cosh\gamma\eta+D\sinh\gamma\eta.$$
For the function $g(\eta,\varphi)$ to be periodic in $\varphi$ it must be $\gamma=k,\;k\in\mathbb{N}$ and the general solution of equation (\ref{etaphiDirich:eq}) is
\begin{align*}g(\eta,\varphi)=&\frac{A_0}{2}+\sum_{k=1}^{+\infty}\left(a_k\cos k\varphi+b_k\sin k\varphi\right)\left(c_k\cosh k\eta+d_k\sinh k\eta\right) \\=&\frac{A_0}{2}+\sum_{k=1}^{+\infty}\bigg[A_k\cosh k\eta\cos k\varphi+B_k\cosh k\eta\sin k\varphi\\
&\qquad\qquad+\;C_k\sinh k\eta\cos k\varphi+D_k\sinh k\eta\sin k\varphi\bigg].\end{align*}
In force of conditions (\ref{etaphiDirich:c1}) and (\ref{etaphiDirich:c2}) we have that $$B_k=C_k=0.$$
Thus, the general solution of equation (\ref{etaphiDirich:eq}) with the constraints (\ref{etaphiDirich:c1}) and (\ref{etaphiDirich:c2}) is
\begin{align}\label{ellipticquasisol}g(\eta,\varphi)=\frac{A_0}{2}+\sum_{k=1}^{+\infty}\big[A_k\cosh k\eta\cos k\varphi+D_k\sinh k\eta\sin k\varphi\big].\end{align}
For $\eta=\hat{\eta}$ we must have
\begin{align}\label{ellipticquasisolboundary}g(\hat{\eta},\varphi)=h(\varphi)=\frac{A_0}{2}+\sum_{k=1}^{+\infty}\big[A_k\cosh k\hat{\eta}\cos k\varphi+D_k\sinh k\hat{\eta}\sin k\varphi\big]\end{align}
\noindent The Fourier coefficients of (\ref{ellipticquasisolboundary}) are \begin{equation}\label{ellipticfourier}
\begin{dcases}A_k=\frac{1}{\pi\cosh k\hat{\eta}}\int_0^{2\pi}h(\tau)\cos k\tau d\tau\\ D_k=\frac{1}{\pi\sinh k\hat{\eta}}\int_0^{2\pi}h(\tau)\sin k\tau d\tau\end{dcases}\end{equation}
\noindent Plugging formulas (\ref{ellipticfourier}) into (\ref{ellipticquasisol}) completes the proof.
\end{proof}

\section{Inverting the changes of coordinates}
We now treat the problem of inverting the changes of coordinates performed so far in order to express the distribution of $\underline{B}(T_{\partial C_R})$ in terms of the original coordinates $(x,y)$. We restrict ourselves to outlining the inversion of the non-linear transformation (\ref{ghizzetti}) and disregard the obvious inversion of the initial linear transformation (\ref{linear}).
By setting
\begin{equation}\label{ABdef}A=\frac{a+b}{2},\qquad B=\frac{a-b}{2}\end{equation}
\noindent we can write formulas (\ref{ghizzetti}) in the form
\begin{equation}\label{wdefAB}w=\left(Ar+B\frac{1}{r}\right)\cos\theta\end{equation}
\begin{equation}\label{zdefAB}z=\left( Ar-B\frac{1}{r}\right)\sin\theta.\end{equation}
\noindent By summing the squares of (\ref{wdefAB}) and (\ref{zdefAB}) we obtain
\begin{equation}\label{somma}w^2+z^2=A^2r^2+B^2\frac{1}{r^2}+2AB(\cos^2\theta-\sin^2\theta).\end{equation}
Taking the difference of the squares yields
\begin{equation}\label{differenza}w^2-z^2=\left(A^2r^2+B^2\frac{1}{r^2}\right)(\cos^2\theta-\sin^2\theta)+2AB.\end{equation}
\noindent Writing formula (\ref{somma}) in the form $$\cos^2\theta-\sin^2\theta=\frac{w^2+z^2-A^2r^2-B^2\frac{1}{r^2}}{2AB}$$ and substituting it into (\ref{differenza}) yields
\begin{equation}\label{preeq1}w^2-z^2=\frac{\left(A^2r^2+B^2\frac{1}{r^2}\right)\left(w^2+z^2-A^2r^2-B^2\frac{1}{r^2}\right)}{2AB}+2AB.\end{equation}
Now we set \begin{equation}\label{sostituzione}m=A^2r^2+B^2\frac{1}{r^2}\end{equation}
which allows us to write equation (\ref{preeq1}) in the form of a second order equation in $m$
$$m^2-(w^2+z^2)m+2AB\left(w^2-z^2-2AB\right).$$
By considering the non-negative root we obtain \begin{equation}\label{msolution}m=\frac{w^2+z^2+\sqrt{(w^2+z^2)^2-2(a^2-b^2)(w^2-z^2)+(a^2-b^2)^2}}{2}.\end{equation}
Equation (\ref{sostituzione}) can now be written as $$A^2r^4-mr^2+B^2=0$$
which implies \begin{equation}\label{whichroot}r^2=\frac{m\pm\sqrt{m^2-4A^2B^2}}{2A^2}.\end{equation}
Recalling that $r\ge q$ for $(w,z)\in E$, we must take the root with positive sign in formula (\ref{whichroot}). Indeed, consider a point  of coordinates $(0,z)\in E$. By taking the negative sign in formula (\ref{whichroot}) we would obtain
$$r^2=\frac{2z^2+(a^2-b^2)-2\sqrt{z^4+z^2(a^2-b^2)}}{(a+b)^2}\le\frac{2z^2+(a^2-b^2)-2\sqrt{z^4}}{(a+b)^2}=q^2$$
which violates the required condition. We have finally obtained an explicit expression of $r$ in terms or $w$ and $z$
\begin{align}\label{rfinal}r=&\sqrt{\frac{m+\sqrt{m^2-4A^2B^2}}{2A^2}}\\=&\frac{\sqrt{w^2+z^2-(a^2-b^2)+\sqrt{(w^2+z^2)^2-2(a^2-b^2)(w^2-z^2)+(a^2-b^2)^2}}}{\sqrt{2}(a+b)}\nonumber\\
&+\frac{\sqrt{w^2+z^2+(a^2-b^2)+\sqrt{(w^2+z^2)^2-2(a^2-b^2)(w^2-z^2)+(a^2-b^2)^2}}}{\sqrt{2}(a+b)}.\nonumber\end{align}
A similar procedure for $\theta$ yields
\begin{equation}\label{thetafinal}\sin\theta=\sqrt{\frac{-(w^2+z^2)+(a^2-b^2)+\sqrt{(w^2+z^2-a^2+b^2)^2+4(a^2-b^2)z^2}}{2(a^2-b^2)}}.\end{equation}
Care must be paid in selecting the correct branch of the $\arcsin$ function while inverting formula (\ref{thetafinal}).\\
We will now obtain an analogous result for the elliptic coordinates defined in (\ref{ellipticcoordinates}). We start by writing the first formula of (\ref{ellipticcoordinates}) in the form
\begin{equation}\label{eci1}\sin^2\varphi=1-\frac{w^2}{(a^2-b^2)\cosh^2 \eta}.\end{equation}
By substituting formula (\ref{eci1}) into the square of the second formula in (\ref{ellipticcoordinates}), we obtain the equation 
$$(a^2-b^2)\sinh^4\eta-\left(w^2+z^2-(a^2-b^2)\right)\sinh^2\eta-z^2=0.$$
By taking the non-negative root, we have that \begin{equation}\label{inverseelliptic}\sinh^2\eta=\frac{w^2+z^2-(a^2-b^2)+\sqrt{(w^2+z^2)^2-2(a^2-b^2)(w^2-z^2)+(a^2-b^2)^2}}{2(a^2-b^2)}.\end{equation}
It can easily be verified that
 \begin{equation}\label{eci2}m=(a^2-b^2)(\sinh^2\eta+\frac{1}{2}).\end{equation}
Thus, by sustituting (\ref{eci2}) into the square of (\ref{rfinal}) and recalling formulas (\ref{ABdef}), we have

\begin{align}r^2&=\frac{2(a^2-b^2)\left(\sinh^2\eta+\frac{1}{2}\right)+\sqrt{4(a^2-b^2)^2\left(\sinh^4\eta+\sinh^2\eta+\frac{1}{4}\right)-(a^2-b^2)^2}}{(a+b)^2}\qquad\qquad\qquad\nonumber\\
&=\frac{2(a^2-b^2)\left(\sinh^2\eta+\frac{1}{2}\right)+2(a^2-b^2)\sqrt{\sinh^2\eta\left(\sinh^2\eta+1\right)}}{(a+b)^2}\qquad\qquad\qquad\nonumber\\
&=2\frac{(a^2-b^2)}{(a+b)^2}\left(\sinh^2\eta+\sinh \eta\cosh \eta+\frac{1}{2}\right)=q^2e^{2\eta}\nonumber\end{align}
\noindent where $q=\sqrt{\frac{a-b}{a+b}}$ as in the previous sections.
\smallskip

We have finally obtained the relationship between the coordinates $(r,\theta)$ on the circular annulus and the elliptic coordinates $(\eta,\varphi)$. Indeed, the substitution of the expression $r=qe^{\eta}$ in (\ref{ghizzetti}) and comparison with (\ref{ellipticcoordinates}) yield
\begin{equation}\label{equivalenceDef}\begin{cases}r=qe^{\eta}\\ \theta=\varphi.\end{cases}\end{equation}
\smallskip

 By means of transformation (\ref{equivalenceDef}), the equivalence of the solutions obtained in theorems \ref{ghizzettithm} and \ref{ellipticthm} is checked.

\bibliographystyle{unsrtnat}

\bibliography{MarchioneOrsingherManuscript} 
\end{document}